\def\BibTeX{{\rm B\kern-.05em{\sc i\kern-.025em b}\kern-.08em
    T\kern-.1667em\lower.7ex\hbox{E}\kern-.125emX}}
\newtheorem*{prop}{Proposition}
\newtheorem*{cor}{Corollary}
\newtheorem{rem}{Remark}
\begin{document}

\title{Neural Multigrid Architectures}

\author{\IEEEauthorblockN{Vladimir Fanaskov}
\IEEEauthorblockA{\textit{Center for Design, Manufacturing, and Materials } \\
\textit{Skolkovo Institute of Science and Technology}\\
Moscow, Russia \\
vladimir.fanaskov@skoltech.ru}}

\maketitle

\begin{abstract}
We propose a convenient matrix-free neural architecture for the multigrid method. The architecture is simple enough to be implemented in less than fifty lines of code, yet it encompasses a large number of distinct multigrid solvers. We argue that a fixed neural network without dense layers can not realize an efficient iterative method. Because of that, standard training protocols do not lead to competitive solvers. To overcome this difficulty, we use parameter sharing and serialization of layers. The resulting network can be trained on linear problems with thousands of unknowns and retains its efficiency on problems with millions of unknowns. From the point of view of numerical linear algebra network's training corresponds to finding optimal smoothers for the geometric multigrid method. We demonstrate our approach on a few second-order elliptic equations. For tested linear systems, we obtain from two to five times smaller spectral radius of the error propagation matrix compare to a basic linear multigrid with Jacobi smoother.
\end{abstract}

\section{Introduction}

In this article, we describe how neural networks can be used to solve a system of linear equations
\begin{equation}
    \label{nmg:eq:linear_equation}
    A x = b,~x\in\mathbb{R}^{n},~b\in\mathbb{R}^{n},~A\in\mathbb{R}^{n\times n},
\end{equation}
in a particular case when $A$ results from the discretization of PDE. Since $A$ is typically large and sparse, iterative methods are preferable to direct ones \cite[Section 8]{Saad_hist}. It is known that arbitrary linear iterative method has a form
\begin{equation}
    \label{nmg:eq:linear_iteration}
    x^{(m+1)} = x^{(m)} + N\left(b - Ax^{(m)}\right),~\det(N)\neq0,
\end{equation}
where $N$ is an approximate inverse of $A$, and $m$ is iteration number \cite[Section 2.2.2]{Hackbusch_iter}. To solve equation~\eqref{nmg:eq:linear_equation}, we represent $N$ as linear neural network $\mathcal{N}(\omega)$ and tune parameters $\omega$ to improve convergence speed, i.e., to obtain as small spectral radius of error propagation matrix $I - NA$ as possible.

The architecture of $\mathcal{N}(\omega)$ is chosen so that it corresponds to a particular geometric multigrid solver. For the article to be self-consistent, we provide a brief review of multigrid techniques in Section~\ref{nmg:section:multigrid_method}. The resulting network $\mathcal{N}(\omega)$ consists of convolutional layers and does not depend on a matrix of a linear operator so that it can be conveniently implemented and applied to a variety of linear problems. The architecture can be found in Section~\ref{nmg:section:multigrid_neural}.

The loss function that we use for unsupervised training is described in Section~\ref{nmg:section:loss}. It is the same function that was used in \cite{Katrutsa_mult} for a black-box optimization of multigrid solvers.

Next, in Section~\ref{nmg:section:restriction} we explain, that the training of $\mathcal{N}(\omega)$ is not straightforward because $A^{-1}$ is non-local, and we typically train on small problems. Namely, it is not enough to find $\omega$ that results in the small value of loss function for a fixed grid (the number of grid points controls the size of the matrix $A$). In addition to that, one needs to present a mechanism that enlarges the network when the grid is refined. If this is not done, the performance of a solver based on realistic neural networks becomes arbitrary bad for a sufficiently fine grid. To overcome this, we use the serialization of layers. Serialization performs well for some architectures, but it does not completely resolve the problem.

Concrete architectures that we test and a baseline model can be found in Section~\ref{nmg:section:models}.

As test linear operators, we use five-point, nine-point, and Mehrstellen discretizations of the Poisson equation in $D=2$ as well as anisotropic problem and a problem with mixed derivatives. The description of equations and learning results can be found in Section~\ref{nmg:section:experiments}. In short, we obtain about five times smaller spectral radius of the error propagation matrix on train set $\left(n=(2^{5}-1)^2\right)$, and from two to four times smaller spectral radius on test set $\left(n=(2^{11}-1)^2\right)$.

We conclude with an overview of related works in Section~\ref{nmg:section:related} and a short summary of the paper in Section~\ref{nmg:section:conclusion}.

All results can be reproduced (see Section~\ref{nmg:section:to_be_reproduced} for details).

\section{Multigrid method}
\label{nmg:section:multigrid_method}
We start by giving a short introduction to the multigrid method. Multigrid is a multilevel iterative method that solves linear system \eqref{nmg:eq:linear_equation} with large sparse matrix $A$. Two components crucial to fast convergence are smoother, and restriction operators \cite[Section 1.5.1]{Trottenberg_mult}.

The smoother is a cheap linear iteration \eqref{nmg:eq:linear_iteration} that effectively reduces error in a subspace $W\subset\mathbb{R}^{n}$. The overall efficiency and a subspace are controlled by the choice of matrix $N$, and the last condition is \eqref{nmg:eq:linear_iteration} ensures consistency.

The role of the restriction operator $P\in\mathbb{R}^{k\times n}, k<n$ is to perform dimension reduction. Ideally $W\perp \text{range}(P)$, so after smoothing $e^{(n+1)}\in \text{range}(P)$. That means we can project on $\text{range}(P)$, reduce the number of unknowns, and retain all information about the solution.

Having restriction and smoothing operators, we can construct a two-grid cycle:
\begin{equation}
    \label{nmg:eq:two_grid_cycle}
    \begin{split}
        &x^{(n+1/3)} = x^{(n)} + N\left(b - Ax^{(n)}\right),\\
        &\left(PAP^{T}\right)e^{(n+1/3)} = P\left(b - Ax^{(n+1/3)}\right),\\
        &x^{(n+2/3)} = x^{(n+1/3)} + P^{T}e^{(n+1/3)}\\
        &x^{(n+1)} = x^{(n+2/3)} + N\left(b - Ax^{(n+2/3)}\right).
    \end{split}
\end{equation}
The first line in \eqref{nmg:eq:two_grid_cycle} corresponds to smoothing, the second line is a coarse-grid equation, the third line is an error correction, and the last line is a smoothing again. Scheme \eqref{nmg:eq:two_grid_cycle} is preferred compare to \eqref{nmg:eq:linear_equation} because $P A P^{T}$ is a $k\times k$ matrix, that is, it is smaller, meaning cheaper to invert.

A multigrid method is a two-grid cycle, applied recursively, i.e., the two-grid cycle is used to solve the second line in \eqref{nmg:eq:two_grid_cycle}. This procedure is repeated until we reach a small enough matrix that can be inverted by direct methods, f.e., LU factorization.

In the case of a simplest geometric multigrid in $D=1$, $P$ is a convolution with stride $2$, and a kernel $\begin{bmatrix}1/2 & 1/4 & 1/2\end{bmatrix}$ (direct product of convolutions in higher dimensions). Smoother is chosen to be either some variant of damped Gauss-Seidel (first line) or damped Jacobi (second line) methods:
\begin{equation}
\label{nmg:eq:smoothers}
    \begin{split}
        &x^{(n+1)} = x^{(n)} + \omega L(A)^{-1}\left(b-Ax^{(n)}\right);\\
        &x^{(n+1)} = x^{(n)} + \omega D(A)^{-1}\left(b-Ax^{(n)}\right),
    \end{split}
\end{equation}
where $D(A)$ is a diagonal part of $A$ and $L(A)$ is a lower triangular (including the diagonal) part of $A$, and $\omega\in\mathbb{R}$ is chosen based on $A$.

Intuition about the role of smoothers and restriction operators can be gained in the simplest case of Poisson equation \cite[Chapter 13]{Saad_iter}. It can be shown that Jacobi smoother averages error. As a result, error considered as a function on a fine grid lacks high-frequency components and, as a result, can be well represented on a coarser grid. This is achieved by convolution operator $P$, which coincides with a low-pass filter combined with subsampling.

\section{Matrix-free multigrid architecture}
\label{nmg:section:multigrid_neural}
To describe our architecture, we need to introduce a few matrices. For each level $k\geq 1$ we use $A_{k}, P_{k}$ to describe matrix of linear operator and the restriction matrix. According to two-grid cycle \eqref{nmg:eq:two_grid_cycle} the following relation holds $A_{k+1} = P_{k}A_{k}P_{k}^{T}$. For level $k=1$, matrix $A_{1}$ should be given either explicitly or as a linear operator, i.e., the black-box function that computes $A_1x$ for any given $x$ suffices.

On each level $k$ we need to implement two-grid cycle \eqref{nmg:eq:two_grid_cycle} as neural network. There are four operations we need to consider: computation of residual $r_{k}\equiv b_{k} - A_{k}x_{k}$, restriction $Pr_{k}$, prolongation (interpolation) $P^{T}e_{k}$, and smoothing $N_{k} r_{k}$.

The simplest operations are restriction and prolongation that can be considered convolution with at least one stride $>1$, and a transpose to this operation.

Computation of the residual is straightforward too. Because $A_{k+1} = P_{k}A_{k}P_{k}^{T}$, any product can be computed recursively:
\begin{equation}
    \label{nmg:eq:Ax_recursion}
    A_{m}x_{m} = \left(\prod_{l=m-1,\dots,1}P_{l}\right)A_{1}\left(\prod_{l=1,\dots,m-1}P_{l}^{T}\right)x_{m}.
\end{equation}
This procedure is illustrated for $k=3$ on Fig.~\ref{nmg:fig:Ax}.

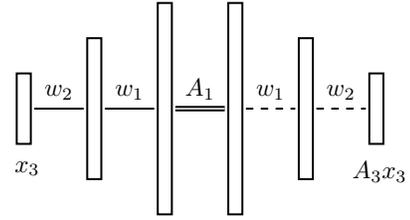
\begin{figure}[t]
    \centering
    \resizebox{0.3\textwidth}{!}{\begin{tikzpicture}
        \draw[black, thick] (0,-0.5) rectangle (0.2, 0.5);
        \node (1) [label=below:$x_3$] at (0.15, -0.5) {};
        \draw[thick] (0.25,0) -- (0.95, 0) node[above, midway] {$w_2$};
        \draw[black, thick] (1.0, -1.0) rectangle (1.2, 1.0);
        \draw[thick] (1.25,0) -- (1.95, 0) node[above, midway] {$w_{1}$};
        \draw[black, thick] (2.0, -1.5) rectangle (2.2, 1.5);
        \draw[thick, double] (2.25,0) -- (2.95, 0) node[above, midway] {$A_{1}$};
        \draw[black, thick] (3.0, -1.5) rectangle (3.2, 1.5);
        \draw[thick, dashed] (3.25,0) -- (3.95, 0) node[above, midway] {$w_{1}$};
        \draw[black, thick] (4.0, -1) rectangle (4.2, 1);
        \draw[thick, dashed] (4.25,0) -- (4.95, 0) node[above, midway] {$w_{2}$};
        \draw[black, thick] (5.0, -0.5) rectangle (5.2, 0.5);
        \node (1) [label=below:$A_3x_3$] at (5.15, -0.5) {};
    \end{tikzpicture}}
    \caption{Because $A_{i+1} = P_{i}A_{i}P_{i}^{T}$, product $A_{3} x_{3}$ can be computed as a set of convolutions (dashed lines), and transposed convolutions (solid lines), and an application of operator on the fine grid (double line); $w_{i}$ corresponds to convolution kernels.}
    \label{nmg:fig:Ax}
\end{figure}

The situation with smoothers is less straightforward. Not all smoothers can be considered in a matrix-free framework. For example, Gauss-Seidel smoother explicitly requires a lower triangular part of the matrix, which can be hard to extract. However, there is a family of smoothers, known as polynomial smoothers \cite[Section 3]{Adams_para}, that are better suited for our purposes. Polynomial smoothers take a form
\begin{equation}
    \label{nmg:eq:polynomial_smoother}
    x^{(n+1)} = x^{(n)} + p(A)\left(b-Ax^{(n)}\right),~p(A) = \sum_{i=0}^{D}\alpha_{i}A^{i},
\end{equation}
where $\alpha_{i},~i=0,\dots,D$ are parameters of the smoother chosen based on matrix $A$. Since \eqref{nmg:eq:polynomial_smoother} contains only vector-matrix products, we can apply the smoother using \eqref{nmg:eq:Ax_recursion}.

As a rule, polynomial smoothers are applied to the matrix with $1$ on diagonal, i.e., the diagonal rescaling $D(A)^{-1}$ is explicitly introduced. We hide this additional factor in convolution operation.

\begin{algorithm}[b]
\caption{Polynomial smoothing.}
\label{nmg:alg:polynomial_smoothing}
    \begin{algorithmic}[1]
        \STATE \textbf{Input:} $x_{k}$, $b_{k}$, kernels $w_{i},~i=k-1,\dots,1$ corresponding to convolutions $P_{l},~l=k-1,\dots,1$, fine grid operator $A$, kernels $\widetilde{w}_{j},~j=0,\dots,D$ that are used to compute $p(A)$.
            \\\hrulefill
            \STATE $r \leftarrow b_{k} - A_{k}(x)$ \label{nmg:alg:polynomial_smoothing:line_2}
            \FOR{$i = 1:(D+1)$}
                \STATE $x_{k} \leftarrow x_{k} + \text{conv}_{\widetilde{w}_{i-1}}\left(r\right)$ \label{nmg:alg:polynomial_smoothing:line_4}
                \STATE $r \leftarrow A_{k}(r)$ \label{nmg:alg:polynomial_smoothing:line_5}
            \ENDFOR
    \end{algorithmic}
\end{algorithm}

Algorithm~\ref{nmg:alg:polynomial_smoothing} specifies the smoothers that we use.
In lines \ref{nmg:alg:polynomial_smoothing:line_2} and \ref{nmg:alg:polynomial_smoothing:line_5}, $A_{k}(x)$ uses kernels $w_{i}$, fine-grid operator $A$ and should be computed as in \eqref{nmg:eq:Ax_recursion}, $\text{conv}_{\widetilde{w}_{i-1}}$ in line \ref{nmg:alg:polynomial_smoothing:line_4} should preserve the size of the input vector, so all strides equal $1$.

To summarize, for a given level $k$, we implement a two-grid cycle \eqref{nmg:eq:two_grid_cycle} as a convolutional neural network with the following adjustments:
\begin{enumerate}
    \item $w_{k}$, $s_{k}$ are kernel and strides (at least one stride should be $>1$) that implement convolution and transposed convolution that corresponds to $P$ and $P^{T}$;
    \item $\widetilde{w}^{(i)}_{k},i=0,\dots,D$ are kernels that are used in Algorithm~\ref{nmg:alg:polynomial_smoothing} that substitutes the first and the last lines in \eqref{nmg:eq:two_grid_cycle};
    \item all convolutions are with zero biases and without nonlinearities,
    \item any matrix-vector product is computed according to \eqref{nmg:eq:Ax_recursion} (see also Fig.~\ref{nmg:fig:Ax}).
\end{enumerate}

The whole multigrid architecture can be constructed by recursive application of two-grid layers. To imitate matrix inversion on the coarsest grid, we use a few additional convolutions.

The presence of residuals makes it hard to draw the resulting architecture. But, in general, a neural network that imitates multigrid resembles U-Net \cite{Ronneberger_unet}. The one crucial difference is that U-Net contains only one ascending and one descending branches, whereas our architecture contains an additional $\Lambda$-shaped network at each place where the residual $b_{k}-A_{k}x_{k}$ is needed.

Our approach offers the following advantages:
\begin{itemize}
    \item Currently, no major machine learning framework supports sparse-sparse matrix multiplication, so $PAP^{T}$ can not be computed efficiently.
    \item The architecture is agnostic to the sparsity pattern of $A$ and $P$ so that they can be changed easily. This can be especially useful when graph neural networks are used to learn the coarsening strategy.
    \item Since the network consists of convolution layers, one can benefit from using GPU.
    \item There is a one-to-one correspondence between some multigrid schemes and proposed architecture. This improves interpretability.
\end{itemize}

The main disadvantage is the additional operations we need to perform to compute $A_{k} x_{k}$. However, on modern GPUs, training on matrices with $n=\left(2^{5}-1\right)^2$ takes a few minutes, so the overhead seems to be justified by the overall convenience of the architecture.

\section{Loss function}
\label{nmg:section:loss}
To find optimal parameters of the neural network described in Section~\ref{nmg:section:multigrid_neural}, we introduce a loss function.

Let $x^{\star}$ be the exact solution to \eqref{nmg:eq:linear_equation}. It is known that for an arbitrary linear iterative method \eqref{nmg:eq:linear_iteration} with symmetric $I-NA$, the following is true
\begin{equation}
    \label{nmg:eq:error_propagation}
    \left\|e^{(n+1)}\right\| \leq \rho\left(I-NA\right)\left\|e^{(n)}\right\|,
\end{equation}
where $e^{(n)} = x^{(n)} - x^{\star}$ is an error, $\rho\left(I-NA\right)$ is a spectral radius, and $\left\|\cdot\right\|$ is an arbitrary norm \cite[Section 2.2.6]{Hackbusch_iter}.

Since neural multigrid architecture can be used as a linear iterative method, upper bound \eqref{nmg:eq:error_propagation} suggests that $\rho\left(I-NA\right)$ is a good loss function.

Because $\rho\left(I-NA\right)$ is not readily available, it is a custom to use an approximation or upper bound to the spectral radius. Following \cite{Katrutsa_mult}, we use Gelfand formula \cite{Kozyakin_accu}, and stochastic trace estimation \cite{Avron_rand} to derive the following approximation to the spectral radius:
\begin{equation}
    \label{nmg:eq:approximate_spectral}
    \rho(B) \simeq \rho_1(B, k, N_{\text{batch}}) \equiv \left(\frac{1}{N_{\text{batch}}}\sum_{j=1}^{N_{\text{batch}}} \left\|B^{k} z_j\right\|_{2}^{2}\right)^{1\big/2k},
\end{equation}
where $B$ is an arbitrary matrix, and each $z_j,~j=1,\dots,N_{\text{batch}}$ is a random vector with components i.i.d. according to Rademacher distribution. In all our experiments we use $k=N_{\text{batch}}=10$.
\section{Restriction on architecture for linear iterative methods}
\label{nmg:section:restriction}
Standard machine learning pipeline consists of choosing an appropriate architecture, training (supervised or unsupervised) with a given loss function, and applying trained model to unseen data \cite[Chapter 11]{Goodfellow_deep}. In this section, we argue that this approach is insufficient for training specific neural networks if we are to use them as iterative methods.

To make an argument, we consider the following boundary value problem:
\begin{equation}
    \label{nmg:eq:Laplace_point_source}
    \Delta u(x, y, z) = - \delta(x)\delta(y)\delta(z),~\left.u(x, y, z)\right|_{x^2+y^2+z^2=R^2} = 0,
\end{equation}
that is, a $3D$ Poisson equation with a point source at the origin, considered inside a sphere of radius $R$. The solution is easily obtained from Green function \cite[Section 1.10]{Jackson_classical}
\begin{equation}
    \label{nmg:eq:Laplace_point_source_solution}
    u(x, y, z) = \frac{1}{4\pi}\left(\frac{1}{\sqrt{x^2+y^2+z^2}} - \frac{1}{R}\right).
\end{equation}
One way to solve \eqref{nmg:eq:Laplace_point_source} numerically is to use finite element method (see \cite{Ciarlet_fini} for introduction). For a suitable defined mesh (for example the mesh as in Fig.~\ref{nmg:fig:two_grids} can be used), we introduce a set of piecewise linear functions $\phi_{i}(x, y, z)$ that possess cardinality property: $\phi_{i}(x_{j}, y_{j}, z_{j}) = \delta_{ij}$, where $(x_{j}, y_{j}, z_{j})$ is a fixed grid point. The solution is approximated as $u(x, y, z) = \sum_{i} \phi_{i}(x, y, z) u_{i}$, and PDE is enforced in a weak form by Petrov-Galerkin condition $ \int dxdydz~\phi_{i}(x, y, z)\left(\Delta u(x, y, z) + \delta(x)\delta(y)\delta(z)\right) = 0$. That gives us a system of linear equations \eqref{nmg:eq:linear_equation} with sparse matrix and sparse right-hand side. The sparsity of the right-hand side is illustrated by Fig.~\ref{nmg:fig:FEM}.

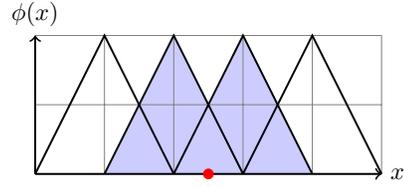
\begin{figure}[t]
    \centering
    \resizebox{0.3\textwidth}{!}{\begin{tikzpicture}
        \fill[blue!20!white] (1, 0) -- (2, 2) -- (3, 0) -- (1, 0);
        \fill[blue!20!white] (2, 0) -- (3, 2) -- (4, 0) -- (2, 0);
        \draw[step=1cm, gray, thin] (0,0) grid (5, 2);
        \draw[thick,->] (0,0) -- (5, 0) node[anchor=west] {$x$};
        \draw[thick,->] (0,0) -- (0, 2) node[anchor=south] {$\phi(x)$};
        \draw[thick, black] (0, 0) -- (1, 2) -- (2, 0);
        \draw[thick, black] (1, 0) -- (2, 2) -- (3, 0);
        \draw[thick, black] (2, 0) -- (3, 2) -- (4, 0);
        \draw[thick, black] (3, 0) -- (4, 2) -- (5, 0);
        \filldraw[red] (2.5, 0) circle (2pt);
    \end{tikzpicture}}
    \caption{When delta-function is presented as a right-hand side of a continuous problem, the weak form results in a sparse right-hand side because only a small number of (shaded) tent functions feels the presence of the source (denoted by a point).}
    \label{nmg:fig:FEM}
\end{figure}

Let $\mathcal{U}$ be a space of functions on a finite $3D$ grid with spacing $\simeq H$, and $\mathcal{N}$ be a linear neural network $\mathcal{U}\overset{\mathcal{N}}{\rightarrow} \mathcal{U}$ that acts like linear operator on space $\mathcal{U}$. Let $u_{k}\in\mathcal{U}$ be a function equals $1$ at point $k$ and $0$ at all other points. Because the grid is finite, it is possible to find a minimal radius $R_{k}$ such that all nonzero elements of $\mathcal{N}\left(u_{k}\right)$ are inside the ball with radius $R_{k}$ centered at point $k$. We define the radius of influence of a given network $\mathcal{N}$ as
\begin{equation*}
    r_{H}(\mathcal{N}) = \max_{k} R_{k}.
\end{equation*}
The example of this radius is given in Fig.~\ref{nmg:fig:two_grids} for convolution with $5\times5$ kernel.

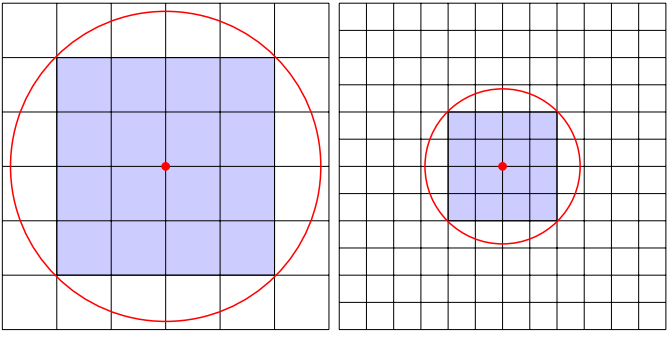
\begin{figure}[t]
    \centering
    \resizebox{0.24\textwidth}{!}{\begin{tikzpicture}
        \filldraw[fill=blue!20!white, draw=black] (1,1) rectangle (5, 5);
        \draw[step=1cm, black, thin] (0,0) grid (6,6);
        \filldraw[red] (3, 3) circle (2pt);
        \draw[red, thick] (3, 3) circle (2.85cm);
    \end{tikzpicture}}
    \resizebox{0.24\textwidth}{!}{\begin{tikzpicture}
        \filldraw[fill=blue!20!white, draw=black] (2, 2) rectangle (4, 4);
        \draw[step=0.5cm, black, thin] (0,0) grid (6,6);
        \draw[red, thick] (3, 3) circle (1.425cm);
        \filldraw[red] (3, 3) circle (2pt);
    \end{tikzpicture}}
    \caption{The figure shows how the receptive field of fixed architecture with only local layers (shaded) changes after refinement. Since convolutions are performed on discrete data, information from the dot in the middle can spread over the smaller region (enclosed by the circle) in physical space. This limits the ability to generalize for a neural network with fixed architecture.}
    \label{nmg:fig:two_grids}
\end{figure}

Now, if refinement is performed and the architecture of the network does not contain dense layers, the radius of influence shrinks as explained in the same Fig.~\ref{nmg:fig:two_grids}. This fact is used to prove the following statement.

\begin{prop}
Let $A_{H}$ be a matrix of linear problem \eqref{nmg:eq:Laplace_point_source} obtained using finite element method on a given grid with spacing $\simeq H$. Let $\mathcal{N}$ be a neural network, that consists on finite number of (local) convolutional layers\footnote{We exclude nonlocal convolutions based on graph Laplacian as in \cite{Bruna_spectral}.}, and used as $N$ in linear iterative method \eqref{nmg:eq:linear_iteration}. Suppose that the network has been trained to provide a good convergence for grid $H$, that is, $\rho(I-\mathcal{N}_{H}A_{H})=\epsilon\ll1$. It is always possible to find a grid with spacing $\simeq h<H$ such that $\rho(I-\mathcal{N}_{h}A_{h})$ is arbitrary close to $1$.
\end{prop}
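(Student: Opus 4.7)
My plan is to exploit a tension between the bounded support growth of iterates produced by a fixed local network and the globally supported true solution of the point-source Poisson problem. Initialize $x^{(0)} = 0$; then
\begin{equation*}
x^{(m)} \;=\; \sum_{k=0}^{m-1}\bigl(I - \mathcal{N}_h A_h\bigr)^{k}\,\mathcal{N}_h b
\end{equation*}
lies in the span of $\{(\mathcal{N}_h A_h)^{j}\,\mathcal{N}_h b : 0 \le j < m\}$. Since $\mathcal{N}_h$ is a composition of a fixed, finite number of convolutions with fixed kernel sizes, its radius of influence $r_h(\mathcal{N}_h)$ is a fixed multiple of $h$ in physical units; the finite-element matrix $A_h$ likewise has a fixed-stencil radius $O(h)$, and the right-hand side $b$ (arising from the delta source) is concentrated in an $O(h)$ neighborhood of the origin. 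Each additional power of $\mathcal{N}_h A_h$ therefore grows the support by $O(h)$, so $x^{(m)}$ is supported in the ball $B(0, c\,m\,h)$ for some constant $c$ depending only on the architecture. I would then fix $m^\star = \lfloor R/(2ch)\rfloor$, so that $x^{(m^\star)}$ vanishes on the outer shell $S := \{R/2 < r < R\}$, where $r = \sqrt{x^2+y^2+z^2}$.

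The next step is an $h$-uniform lower bound on the relative error at $m = m^\star$. Using the exact formula $u(x,y,z) = (4\pi)^{-1}(1/r - 1/R)$, a Riemann-sum approximation gives
\begin{equation*}
\|x^{\star}\|_2^{2} \;\sim\; h^{-3}\!\int_{0}^{R}(1-r/R)^{2}\,dr,\qquad \|\Pi_S\,x^{\star}\|_2^{2} \;\sim\; h^{-3}\!\int_{R/2}^{R}(1-r/R)^{2}\,dr,
\end{equation*}
whose ratio is an absolute constant $c_0 > 0$ independent of $h$. Because $x^{(m^\star)}$ vanishes on $S$, this yields $\|e^{(m^\star)}\|_2 \ge \|\Pi_S x^{\star}\|_2 \ge \sqrt{c_0}\,\|e^{(0)}\|_2$: after $m^\star = \Theta(1/h)$ iterations the relative error cannot drop below a fixed positive constant, no matter how $\mathcal{N}_h$ was trained.

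Finally, I would convert this slow decay into a lower bound on $\rho_h := \rho(I - \mathcal{N}_h A_h)$. Writing $T_h := I - \mathcal{N}_h A_h$, the previous step gives $\|T_h^{m^\star}\|_2 \ge \sqrt{c_0}$. If one can bound $\|T_h^{m}\|_2 \le C\,m^{d-1}\,\rho_h^{m}$ with $C, d$ controlled uniformly in $h$, then
\begin{equation*}
\rho_h \;\ge\; \left(\frac{\sqrt{c_0}}{C\,(m^\star)^{d-1}}\right)^{1/m^\star} \;\xrightarrow[h\to 0]{}\; 1.
\end{equation*}
The main obstacle lies precisely here: Gelfand's formula $\|T^m\|^{1/m} \to \rho(T)$ is asymptotic, and a single-$m$ lower bound on $\|T_h^{m^\star}\|_2$ does not, for a general non-normal $T_h$, force $\rho_h$ close to $1$. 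In the symmetric case mentioned around \eqref{nmg:eq:error_propagation} one has $\|T_h^{m}\|_2 = \rho_h^{m}$ and the argument closes immediately; in general one must argue mild uniform control of the Jordan or eigenvector structure of $T_h$ from the convolutional form of $\mathcal{N}_h$ and $A_h$, or restrict to a sequence $h \to 0$ along which such control is available.
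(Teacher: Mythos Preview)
Your argument is essentially the paper's: both start from $x^{(0)}=0$, use locality of $\mathcal{N}_h$ and $A_h$ to confine $x^{(K)}$ to a ball of radius $\simeq K\,r_h(\mathcal{N})$, lower-bound $\|e^{(K)}\|_2/\|e^{(0)}\|_2$ by the $L_2$-mass of the explicit Green-function solution outside that ball (the paper gets $(1-Kr_h/R)^{3/2}$ via the same radial integral you wrote, with the $r^2$ Jacobian cancelling the $1/r^2$ in $u^2$), and then pass to $\rho$ via \eqref{nmg:eq:error_propagation}. The non-normality obstacle you flag is genuine, and the paper does not resolve it either---it simply invokes $\rho^{K}\ge \|e^{(K)}\|/\|e^{(0)}\|$, which is exactly the symmetric-case bound \eqref{nmg:eq:error_propagation}; so your proof is already at parity with the paper's, and your caveat is well placed rather than a defect relative to the original.
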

\begin{proof}
Without loss of generality we can assume that for grid $H$ the radius of influence $r_{H}(\mathcal{N})$ is smaller than a grid size in a physical space, which is $R$ for our problem. For $h = H\big/ 2^{p}, p>1$ the radius of influence is $r_{h}(\mathcal{N}) = r_{H}(\mathcal{N})/2^{p}$. Let $b_{h}$ be a discrete right hand side corresponding to a delta function in equation~\eqref{nmg:eq:Laplace_point_source}. If we start from zero initial guess $x^{(0)} = 0$, an estimation to the initial error in $L_{2}$ norm reads
\begin{equation}
    h^3\left\|e^{(0)}\right\|_{2}^{2} \simeq 4 \pi\int_{0}^{R} dr~ r^2 u(r)^2 = R\big/(12\pi),
\end{equation}
and a lower bound on error for step $n=K$ ($Kr_{h}\left(\mathcal{N}\right)<R$) reads
\begin{equation}
    \label{nmg:eq:lower_bound}
    \begin{split}
        h^3\left\|e^{(K)}\right\|_{2}^{2} \geq 4 \pi\int_{Kr_{h}\left(\mathcal{N}\right)}^{R} dr~ r^2 u(r)^2 = \\ = \left(R\big/(12\pi)\right)\left(1 - \frac{K r_{h}\left(\mathcal{N}\right)}{R}\right)^3.
    \end{split}
\end{equation}
To derive equation~\eqref{nmg:eq:lower_bound}, we assumed that our iterative method recovers the exact solution for all points that the network reached. Note that this argument is valid only because $b_{h}$ is a sparse vector.

From \eqref{nmg:eq:error_propagation} we conclude
\begin{equation}
    \rho(I-\mathcal{N}_{h}A)^{K} \geq \left\|e^{(K)}\right\|_{2} \big/ \left\|e^{(0)}\right\|_{2} \geq \left(1 - \frac{K r_{h}\left(\mathcal{N}\right)}{R}\right)^{3/2}.
\end{equation}
\end{proof}
Because $r_{h}$ can be arbitrary small for sufficiently small $h=H/2^{p}$, the expression in the brackets above can be arbitrary close to $1$, which signifies arbitrary slow convergence.

\begin{rem}
    The proposition above holds for networks that consist of (local) convolutional layers. We exclude networks with dense and nonlocal layers because they require $\simeq O(N^2)$ ($N$ is a number of inputs) flops, which is unacceptable for iterative methods. On the other hand, convolutional neural networks require $\simeq O(N)$ flops and can be applied on grids with different sizes and geometries.
\end{rem}

\begin{rem}
    Our ``radius of influence'' is similar to the ``domain of dependence'' used to analyze convergence of numerical methods \cite[Section 10.7]{LeVeque_fini}. Also, there is an evident parallel with CFL condition \cite{CFL}.
\end{rem}

\begin{cor}
Let $A_{H}$ be a matrix of linear problem \eqref{nmg:eq:Laplace_point_source} obtained using finite element method on a given grid with spacing $\simeq H$. Let $\mathcal{N}$ be a neural network, that consists of finite number of (local) convolutional layers. It is not possible to have $\left\|I-\mathcal{N}_{h}A_{h}\right\|\leq \epsilon \ll 1$, with $\epsilon$ independent on $h<H$. In other words, it is impossible to uniformly approximate inverses to $A_{h}$ using fixed architecture with local layers.
\end{cor}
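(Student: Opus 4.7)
The plan is to reduce the corollary to the proposition by contradiction, using the classical inequality $\rho(B)\leq\|B\|$ valid for every submultiplicative matrix norm. Concretely, I would assume that there exist $\epsilon\ll 1$ and a range of spacings $h<H$ on which
\begin{equation*}
    \|I-\mathcal{N}_{h}A_{h}\|\leq\epsilon
\end{equation*}
holds uniformly, and then derive a contradiction with the proposition.

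First, I would note that the hypothesized uniform norm bound immediately implies the uniform spectral radius bound $\rho(I-\mathcal{N}_{h}A_{h})\leq\epsilon$ for all $h<H$, since the spectral radius is dominated by any submultiplicative norm. Next, I would pick any intermediate spacing $H'<H$; the uniform bound guarantees $\rho(I-\mathcal{N}_{H'}A_{H'})\leq\epsilon\ll 1$, so $H'$ satisfies precisely the hypothesis of the proposition (with $H'$ playing the role of $H$ there). The architectural hypothesis of the proposition is automatic, because by assumption $\mathcal{N}$ is a fixed network built from a finite number of local convolutional layers.

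Applying the proposition then yields some $h=H'/2^{p}<H'<H$ for which $\rho(I-\mathcal{N}_{h}A_{h})$ is arbitrarily close to $1$. This contradicts the uniform spectral radius bound of the previous paragraph, completing the argument. The conclusion about uniform approximation of inverses follows by setting $B=I-\mathcal{N}_{h}A_{h}$ and reading the statement $\|I-\mathcal{N}_{h}A_{h}\|\leq\epsilon$ directly as the approximation property.

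The argument is essentially bookkeeping on top of the proposition; there is no real obstacle. The only point that deserves a sentence of care is the choice of norm: the conclusion is stated for an unspecified $\|\cdot\|$, and one must observe that the inequality $\rho\leq\|\cdot\|$ used above requires only submultiplicativity, so the corollary covers the natural choices (operator $2$-norm, Frobenius norm, etc.) simultaneously. This is why the result can be stated as a corollary rather than a separate theorem.
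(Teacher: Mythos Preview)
Your argument is correct and coincides with the paper's own proof: the paper simply invokes $\rho(I-\mathcal{N}_{h}A_{h})\leq\|I-\mathcal{N}_{h}A_{h}\|$ for any matrix norm and concludes by contradiction with the proposition. Your write-up is more explicit about the intermediate spacing and the submultiplicativity hypothesis, but the route is identical.
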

\begin{proof}
Since $\rho(I-\mathcal{N}_{h}A_{h})\leq \left\|I-\mathcal{N}_{h}A_{h}\right\|$ for any matrix norm, the statement can be proven by contradiction.
\end{proof}

\begin{rem}
    It is crucial that operator $A_{h}^{-1}$ is nonlocal. For example, $A_{h}$ from the statement is equivalent to $\mathcal{N}$ with a single convolutional layer.
\end{rem}

\begin{rem}
    It is known that neural network can approximate arbitrary nonlinear operator \cite{Chen_univ}. The corollary above does not contradict this result because it is restricted to neural networks with a finite number of layers.
\end{rem}

\section{Architectures and a baseline solver}
\label{nmg:section:models}
Architecture that we propose in Section~\ref{nmg:section:multigrid_neural} is a convolutional neural network. We want to train this architecture on small linear problems with a number of variables $n \simeq 2^{10}$ and apply it on large linear problems with $n \simeq 2^{20}$. According to the result in the previous section, it is necessary to enlarge the network when we refine the grid. The simplest strategy is a serialization of layers. By serialization, we mean that an additional layer uses parameters from a previous layer. Here we formulate a few concrete architectures that we are going to compare in Section~\ref{nmg:section:experiments}.

\subsection{LMG}
As a baseline model we use multigrid with linear interpolation and two pre-smoothing and two post-smoothing Jacobi sweeps (second line of equation~\eqref{nmg:eq:smoothers}) with $\omega=4/5$ (this $\omega$ is optimal for five-point discretization of Poisson equation in $2D$ \cite[Section 2.1.2]{Trottenberg_mult}). Linear interpolation means that $P$ corresponds to convolution with strides $(2, 2)$ with the kernel
\begin{equation}
\label{nmg:eq:linear_interpolation}
    k_{\text{linear}} =
    \frac{1}{2}\left[\begin{matrix}
        1/4 & 1/2 & 1/4 \\
        1/2 & 1 & 1/2 \\
        1/4 & 1/2 & 1/4
    \end{matrix}\right].
\end{equation}

\subsection{s$1$MG(rs)}
The name of the model derived from the fact that it is a neural multigrid (MG) architecture with a single serialized layer (s1), which contain adjustable restriction and smoothing operators (rs), with weights $w$ and $\widetilde{w}$ respectively. To have the same number of floating-point operations as a baseline model, we use smoothing (Algorithm~\ref{nmg:alg:polynomial_smoothing}) with $D=0$. Both $w$ and $\widetilde{w}$ represents kernels of sizes $3\times3$, which initially coincide with linear interpolation~\eqref{nmg:eq:linear_interpolation}. Convolutional layer with kernel $w$ has strides $(2, 2)$, and the layer with kernel $\widetilde{w}$ has strides $(1, 1)$. For this model, we use exact matrix inversion as a coarse-grid correction. This is possible because we can always stack enough layers to have a single unknown on a coarse grid for considered model problems.

\subsection{s$1$MG(s)}
This model is the same as the previous one but with two differences. First, the restriction operator is fixed to be linear interpolation \eqref{nmg:eq:linear_interpolation}, and only the smoothing operator is learned. Second, we explicitly incorporate diagonal rescaling with $D(A)^{-1}$ on each level. This is possible because restriction operators are fixed, so all diagonal can be computed in advance.

\subsection{s$3$MG(s)}
The model is the same as a previous one, but now we train three distinct layers with $\widetilde{w}_{1}$, $\widetilde{w}_{2}$, $\widetilde{w}_{3}$. The serialization is performed as follows:
\begin{equation}
    \begin{split}
        &\text{layer }1:~\widetilde{w}_{1};~
        \text{layer }2:~\widetilde{w}_{2};~
        \text{layer }3:~\widetilde{w}_{3};\\
        &\text{layer }4:~\widetilde{w}_{1};~
        \text{layer }5:~\widetilde{w}_{2};~
        \text{layer }6:~\widetilde{w}_{3};\\
        &\text{layer }7:~\widetilde{w}_{1};~\dots
    \end{split}
\end{equation}

\subsection{U-Net}

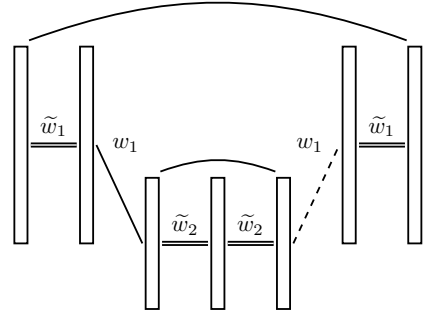
\begin{figure}[t]
    \centering
    \resizebox{0.3\textwidth}{!}{\begin{tikzpicture}
        \draw[black, thick] (0,-1.5) rectangle (0.2, 1.5);
        \node[] at (0.1, 1.55) (a1) {};
        \draw[thick, double] (0.25,0) -- (0.95, 0) node[above, midway] {$\widetilde{w}_1$};
        \draw[black, thick] (1.0,-1.5) rectangle (1.2, 1.5);
        \draw[thick] (1.25,0) -- (1.95, -1.5);
        \node[] at (1.7, 0) {$w_{1}$};
        \draw[black, thick] (2.0,-2.5) rectangle (2.2, -0.5);
        \node[] at (2.1, -0.45) (b1) {};
        \draw[thick, double] (2.25,-1.5) -- (2.95, -1.5) node[above, midway] {$\widetilde{w}_2$};
        \draw[black, thick] (3.0,-2.5) rectangle (3.2, -0.5);
        \draw[thick, double] (3.25,-1.5) -- (3.95, -1.5) node[above, midway] {$\widetilde{w}_2$};
        \draw[black, thick] (4.0,-2.5) rectangle (4.2, -0.5);
        \node[] at (4.1, -0.45) (b2) {};
        \draw[thick] (b1) to[out=20,in=160] (b2);
        \draw[thick, dashed] (4.25, -1.5) -- (4.95, 0);
        \node[] at (4.5, 0) {$w_{1}$};
        \draw[black, thick] (5.0,-1.5) rectangle (5.2, 1.5);
        \draw[thick, double] (5.25,0) -- (5.95, 0) node[above, midway] {$\widetilde{w}_1$};
        \draw[black, thick] (6.0,-1.5) rectangle (6.2, 1.5);
        \node[] at (6.1, 1.55) (a2) {};
        \draw[thick] (a1) to[out=20,in=160] (a2);
    \end{tikzpicture}}
    \caption{U-Net architecture with two layers. Convolutions with all strides equal $1$ are denoted by double lines (they correspond to smoothing in multigrid architecture), the single line represents convolution with at least one stride $>1$, dashed line is a transpose to this operation, a curved line is a skip connection (copy and add).}
    \label{nmg:fig:unet}
\end{figure}

This is an attempt to reproduce results from \cite{Hsieh_lear}.\footnote{Which is nontrivial because the code is absent and the architecture of the model is unspecified. We cannot also use results from the article because they are scarce, and authors measure performance relative to the multigrid method, which they did not bother to describe in detail.} We use architecture presented in Fig.~\ref{nmg:fig:unet}, but with $5$ layers. U-Net is used as $N$ in linear iteration \eqref{nmg:eq:linear_interpolation}. Parameters for all layers are distinct, and no serialization is performed.

\subsection{fMG}

This is another model without serialization. We use $5$ layers with distinct restriction $w$ and smoothing $\widetilde{w}$ operators. Two convolutions are used as a coarse grid correction. All kernels are initialised as bilinear interpolation \eqref{nmg:eq:linear_interpolation}.

\section{Experiments}
\label{nmg:section:experiments}
We start by defining the model equation and then comment on the performance of the models.  All of the equations below correspond to the following boundary value problem
\begin{equation}
    \begin{split}
    \left(a\frac{\partial^2}{\partial x^2} + b\frac{\partial^2}{\partial y^2} + c\frac{\partial^2}{\partial x\partial y}\right)u(x, y) = f(x, y),\\
    x,y\in\left(0, 1\right)^2\equiv\Gamma,~\left.u(x, y)\right|_{\partial \Gamma} = 0,
    \end{split}
\end{equation}
that is, a second-order equation with Dirichlet boundary conditions. For all discrete problems, we also perform a Jacobi preconditioning step $A \rightarrow D(A)^{-1/2} A D(A)^{-1/2}$ \cite[Section 3.1]{Wathen_prec}.
\subsection{Model equations}
\subsubsection{Poisson equation}
Here $a = b = -1$, $c = 0$, and the corresponding kernels are
\begin{equation}
\label{nmg:eq:poisson_5}
    k_{\text{P(5)}} = \left[\begin{matrix}
    0 & -1/4 & 0\\
    -1/4 & 1 & -1/4\\
    0 & -1/4 & 0
    \end{matrix}\right],
\end{equation}
\begin{equation}
\label{nmg:eq:poisson_9}
    k_{\text{P(9)}} = \left[\begin{matrix}
    0 & 0 & 1/60 & 0 & 0\\
    0 & 0 & -4/15 & 0 & 0\\
    1/60 & -4/15 & 1 & -4/15 & 1/60\\
    0 & 0 & -4/15 & 0 & 0\\
    0 & 0 & 1/60 & 0 & 0\\
    \end{matrix}\right],
\end{equation}
\begin{equation}
\label{nmg:eq:poisson_Mehrstellen}
    k_{\text{P(M)}} = \left[\begin{matrix}
    -1/20 & -1/5 & -1/20\\
    -1/20 & 1 & -1/20\\
    -1/20 & -1/5 & -1/20\\
    \end{matrix}\right],
\end{equation}
which correspond to second-order, and two distinct fourth-order schemes. The last discretization is known as Mehrstellen and can be used to construct sixth-order accurate discretization for sufficiently smooth right-hand side and boundary data \cite{Rosser_nine}.
\subsubsection{Anisotropic Poisson equation}
In this case $a=-\epsilon$, $b=-1$, $c=0$ and the kernel reads
\begin{equation}
\label{nmg:eq:anisotropic}
    k_{\text{A}} = \left[\begin{matrix}
    0 & -1/(2+2\epsilon) & 0\\
    -\epsilon/(2+2\epsilon) & 1 & -\epsilon/(2+2\epsilon)\\
    0 & -1/(2+2\epsilon) & 0
    \end{matrix}\right],
\end{equation}
and we use $\epsilon = 2$ and $\epsilon = 10$.
\subsubsection{Mixed derivative}
Here $a = b = -1$ and $c = 2\tau$. The kernel is
\begin{equation}
\label{nmg:eq:mixed}
    k_{\text{M}} = \left[\begin{matrix}
    -\tau/8 & -1/4 & \tau/8\\
    -1/4 & 1 & -1/4\\
    \tau/8 & -1/4 & -\tau/8
    \end{matrix}\right],
\end{equation}
and we test for $\tau=1/4$ and $\tau=3/4$.
\subsection{Results}
Results are gathered in Tables \ref{nmg:table:Poisson_5}--\ref{nmg:table:Mixed_3_4}. Each table contains $\rho(I-\mathcal{N}A)$ approximated by equation \eqref{nmg:eq:approximate_spectral} for a given architecture $\mathcal{N}$, $J$ fixes the number of grid points along each direction $n_x = (2^J-1)$, $n_y = (2^J-1)$, and a total number of points $n = n_x n_y$. Symbol ``$-$'' means that $\rho_{1}\left(\left(I-\mathcal{N}A\right), 10,~10\right)\geq1$ (see \eqref{nmg:eq:approximate_spectral})\footnote{This fact does not automatically mean that the actual spectral radius is greater than one. It might as well be merely close to one. In any case $\rho_{1}\left(\left(I-\mathcal{N}A\right), 10,~10\right)\geq1$ implies a significant deterioration of the solver.}. Each model that uses serialization applied with $J$ layers, U-Net and fMG both contain $\leq5$ layers for all grids. The training is done for $J\le 5$, then we test for $J\in\left[6, 11\right]$.
\subsubsection{Poisson equation}
\begin{table}[t]
    \caption{$k_{\text{P(5)}}$, equation~\eqref{nmg:eq:poisson_5}, $\rho(I-\mathcal{N}A)$}
    \begin{center}
        \begin{tabular}{c|c|c|c|c|c|c}
        $J$ & LMG & s$1$MG(rs) & s$1$MG(s) & s$3$MG(s) & U-Net & fMG\\
        \hline
        $3$ & $0.11$ & $0.047$ & $0.046$ & $0.028$ & $0.50$ & $0.031$\\
        \hline
        $4$ & $0.13$ & $0.051$ & $0.050$ & $0.035$ & $0.54$ & $0.034$\\
        \hline
        $5$ & $0.15$ & $0.057$ & $0.058$ & $0.041$ & $0.58$ & $0.037$\\
        \hline
        \hline
        $6$ & $0.16$ & $0.19$ & $0.066$ & $0.050$ & $0.92$ & $0.37$\\
        \hline
        $7$ & $0.17$ & $0.58$ & $0.073$ & $0.059$ & $-$ & $0.80$\\
        \hline
        $8$ & $0.19$ & $-$ & $0.080$ & $0.065$ & $-$ & $-$\\
        \hline
        $9$ & $0.20$ & $-$ & $0.088$ & $0.073$ & $-$ & $-$\\
        \hline
        $10$ & $0.21$ & $-$ & $0.094$ & $0.083$ & $-$ & $-$\\
        \hline
        $11$ & $0.23$ & $-$ & $0.10$ & $0.092$ & $-$ & $-$\\
        \end{tabular}
        \label{nmg:table:Poisson_5}
    \end{center}
\end{table}
\begin{table}[b]
    \caption{$k_{\text{P(9)}}$, equation~\eqref{nmg:eq:poisson_9}, $\rho(I-\mathcal{N}A)$}
    \begin{center}
        \begin{tabular}{c|c|c|c|c|c|c}
            $J$ & LMG & s$1$MG(rs) & s$1$MG(s) & s$3$MG(s) & U-Net & fMG\\
            \hline
            $3$ & $0.16$ & $0.058$ & $0.069$ & $0.042$ & $0.53$ & $0.030$\\
            \hline
            $4$ & $0.22$ & $0.063$ & $0.073$ & $0.041$ & $0.58$ & $0.031$\\
            \hline
            $5$ & $0.25$ & $0.070$ & $0.079$ & $0.049$ & $0.62$ & $0.038$\\
            \hline
            \hline
            $6$ & $0.28$ & $0.081$ & $0.088$ & $0.086$ & $-$ & $0.54$\\
            \hline
            $7$ & $0.30$ & $0.33$ & $0.096$ & $0.11$ & $-$ & $0.89$\\
            \hline
            $8$ & $0.32$ & $0.82$ & $0.10$ & $0.12$ & $-$ & $-$\\
            \hline
            $9$ & $0.35$ & $-$ & $0.11$ & $0.13$ & $-$ & $-$\\
            \hline
            $10$ & $0.37$ & $-$ & $0.12$ & $0.14$ & $-$ & $-$\\
            \hline
            $11$ & $0.40$ & $-$ & $0.13$ & $0.15$ & $-$ & $-$\\
        \end{tabular}
        \label{nmg:table:Poisson_9}
    \end{center}
\end{table}
\begin{table}[tb]
    \caption{$k_{\text{P(M)}}$, equation~\eqref{nmg:eq:poisson_Mehrstellen}, $\rho(I-\mathcal{N}A)$}
    \begin{center}
        \begin{tabular}{c|c|c|c|c|c|c}
            $J$ & LMG & s$1$MG(rs) & s$1$MG(s) & s$3$MG(s) & U-Net & fMG\\
            \hline
            $3$ & $0.073$ & $0.030$ & $0.036$ & $0.022$ & $0.40$ & $0.017$\\
            \hline
            $4$ & $0.094$ & $0.034$ & $0.041$ & $0.028$ & $0.44$ & $0.019$\\
            \hline
            $5$ & $0.11$ & $0.041$ & $0.048$ & $0.02$ & $0.60$ & $0.022$\\
            \hline
            \hline
            $6$ & $0.12$ & $0.13$ & $0.058$ & $0.042$ & $0.96$ & $0.52$\\
            \hline
            $7$ & $0.13$ & $0.40$ & $0.065$ & $0.051$ & $-$ & $0.99$\\
            \hline
            $8$ & $0.14$ & $0.78$ & $0.071$ & $0.057$ & $-$ & $-$\\
            \hline
            $9$ & $0.15$ & $0.99$ & $0.077$ & $0.063$ & $-$ & $-$\\
            \hline
            $10$ & $0.16$ & $-$ & $0.083$ & $0.069$ & $-$ & $-$\\
            \hline
            $11$ & $0.17$ & $-$ & $0.090$ & $0.076$ & $-$ & $-$\\
        \end{tabular}
        \label{nmg:table:Mehrstellen}
    \end{center}
\end{table}
(Tables \ref{nmg:table:Poisson_5}--\ref{nmg:table:Mehrstellen}) For all discretizations of the Poisson equation, we can see that architectures U-Net, fMG, and s$1$MG(rs) fail to provide a good solver for $J\ge 6$.

Presumably, the spectral radius of error propagation matrices corresponding to U-Net and fMG architectures deteriorates because neural networks have fixed sizes.

This explanation does not work for s$1$MG(rs) because of the serialization performed. We can conjure that because both restriction and smoothing operators are optimized, s$1$MG(rs) is getting tuned to the spectrum of the matrix with $n=(2^5-1)^2$, since the spectrum changes when $J$ increases, the solver ceases to be efficient. It is evident from other examples that the naive serialization does not seem to work when both restriction and smoothing operators are optimized.

The only two solvers (besides a baseline model) that retain their efficiency are s$1$MG(s) and s$3$MG(s). The latter is better than the former for five-point \eqref{nmg:eq:poisson_5} and Mehrstellen \eqref{nmg:eq:poisson_Mehrstellen} discretizations, but for the long stencil \eqref{nmg:eq:poisson_9} s$1$MG(s) is superior.

We can conclude that for the Poisson equation, U-Net is the weakest model, fMG and s$1$MG(rs) fail to generalize on the test set, and both s$1$MG(s) and s$3$MG(s) can generalize and outperform a baseline model on a test set.

\subsubsection{Anisotropic Poisson equation}
\begin{table}[b]
    \caption{$k_{\text{A}}$, equation~\eqref{nmg:eq:anisotropic}, $\epsilon=2$}
    \begin{center}
        \begin{tabular}{c|c|c|c|c|c|c}
        $J$ & LMG & s$1$MG(rs) & s$1$MG(s) & s$3$MG(s) & U-Net & fMG\\
        \hline
        $3$ & $0.23$ & $0.071$ & $0.076$ & $0.047$ & $0.65$ & $0.060$\\
        \hline
        $4$ & $0.29$ & $0.076$ & $0.085$ & $0.048$ & $0.70$ & $0.067$\\
        \hline
        $5$ & $0.31$ & $0.084$ & $0.093$ & $0.054$ & $0.76$ & $0.075$\\
        \hline
        \hline
        $6$ & $0.33$ & $0.31$ & $0.10$ & $0.066$ & $-$ & $0.39$\\
        \hline
        $7$ & $0.36$ & $0.74$ & $0.11$ & $0.089$ & $-$ & $0.74$\\
        \hline
        $8$ & $0.38$ & $-$ & $0.12$ & $0.10$ & $-$ & $-$\\
        \hline
        $9$ & $0.41$ & $-$ & $0.13$ & $0.11$ & $-$ & $-$\\
        \hline
        $10$ & $0.44$ & $-$ & $0.15$ & $0.12$ & $-$ & $-$\\
        \hline
        $11$ & $0.47$ & $-$ & $0.16$ & $0.13$ & $-$ & $-$\\
        \end{tabular}
        \label{nmg:table:Anisotropic_2}
    \end{center}
\end{table}
\begin{table}[t]
    \caption{$k_{\text{A}}$, equation~\eqref{nmg:eq:anisotropic}, $\epsilon=10$}
    \begin{center}
        \begin{tabular}{c|c|c|c|c|c|c}
        $J$ & LMG & s$1$MG(rs) & s$1$MG(s) & s$3$MG(s) & U-Net & fMG\\
        \hline
        $3$ & $0.59$ & $0.40$ & $0.44$ & $0.42$ & $0.91$ & $0.44$\\
        \hline
        $4$ & $0.73$ & $0.42$ & $0.47$ & $0.42$ & $0.99$ & $0.47$\\
        \hline
        $5$ & $0.81$ & $0.49$ & $0.53$ & $0.49$ & $-$ & $0.53$\\
        \hline
        \hline
        $6$ & $0.88$ & $-$ & $0.59$ & $0.53$ & $-$ & $-$\\
        \hline
        $7$ & $0.94$ & $-$ & $0.63$ & $0.57$ & $-$ & $-$\\
        \hline
        $8$ & $-$ & $-$ & $0.68$ & $0.61$ & $-$ & $-$\\
        \hline
        $9$ & $-$ & $-$ & $0.73$ & $0.65$ & $-$ & $-$\\
        \hline
        $10$ & $-$ & $-$ & $0.78$ & $0.70$ & $-$ & $-$\\
        \hline
        $11$ & $-$ & $-$ & $0.83$ & $0.75$ & $-$ & $-$\\
        \end{tabular}
        \label{nmg:table:Anisotropic_10}
    \end{center}
\end{table}
(Tables \ref{nmg:table:Anisotropic_2}, \ref{nmg:table:Anisotropic_10}) For equation \eqref{nmg:eq:anisotropic}, the trend is largely the same. That is, fMG, s$1$MG(rs) and U-Net lose their efficiency, s$1$MG(s) and s$3$MG(s) are robust and outperform a baseline model.

It is instructive to discuss results for anisotropic equation with $\epsilon=10$. First, we can see that all solvers are relatively inefficient. The reason is a full coarsening that we applied. If one uses semicoarsening instead, the results would be the same as for the isotropic Poisson equation. Because of the full coarsening, the U-Net solver fails already on a train set. If one further increases $\epsilon$, our networks would not be able to provide efficient solvers unless strides are chosen appropriately. If strides and sizes of filters are considered as hyperparameters, it should be possible to apply Bayesian optimization \cite{Shahriari_taki}, reinforcement learning \cite{Li_hype}, or genetic programming \cite{Schitt_cons} to construct optimal solver.

\subsubsection{Mixed derivative}
\begin{table}[b]
    \caption{$k_{\text{M}}$, equation~\eqref{nmg:eq:mixed}, $\tau=1/4$}
    \begin{center}
        \begin{tabular}{c|c|c|c|c|c|c}
        $J$ & LMG & s$1$MG(rs) & s$1$MG(s) & s$3$MG(s) & U-Net & fMG\\
        \hline
        $3$ & $0.11$ & $0.040$ & $0.049$ & $0.029$ & $0.49$ & $0.043$\\
        \hline
        $4$ & $0.14$ & $0.049$ & $0.056$ & $0.032$ & $0.54$ & $0.048$\\
        \hline
        $5$ & $0.15$ & $0.061$ & $0.065$ & $0.037$ & $0.58$ & $0.054$\\
        \hline
        \hline
        $6$ & $0.17$ & $0.27$ & $0.073$ & $0.045$ & $0.88$ & $0.34$\\
        \hline
        $7$ & $0.18$ & $0.67$ & $0.081$ & $0.052$ & $-$ & $0.78$\\
        \hline
        $8$ & $0.19$ & $-$ & $0.088$ & $0.062$ & $-$ & $0.98$\\
        \hline
        $9$ & $0.21$ & $-$ & $0.095$ & $0.072$ & $-$ & $-$\\
        \hline
        $10$ & $0.22$ & $-$ & $0.10$ & $0.083$ & $-$ & $-$\\
        \hline
        $11$ & $0.24$ & $-$ & $0.11$ & $0.092$ & $-$ & $-$\\
        \end{tabular}
        \label{nmg:table:Mixed_1_4}
    \end{center}
\end{table}
\begin{table}[t]
    \caption{$k_{\text{M}}$, equation~\eqref{nmg:eq:mixed}, $\tau=3/4$}
    \begin{center}
        \begin{tabular}{c|c|c|c|c|c|c}
        $J$ & LMG & s$1$MG(rs) & s$1$MG(s) & s$3$MG(s) & U-Net & fMG\\
        \hline
        $3$ & $0.24$ & $0.097$ & $0.15$ & $0.055$ & $0.51$ & $0.062$\\
        \hline
        $4$ & $0.35$ & $0.11$ & $0.16$ & $0.071$ & $0.56$ & $0.069$\\
        \hline
        $5$ & $0.41$ & $0.12$ & $0.19$ & $0.087$ & $0.61$ & $0.082$\\
        \hline
        \hline
        $6$ & $0.46$ & $0.24$ & $0.23$ & $0.19$ & $0.79$ & $0.33$\\
        \hline
        $7$ & $0.50$ & $-$ & $0.25$ & $0.25$ & $0.99$ & $0.72$\\
        \hline
        $8$ & $0.54$ & $-$ & $0.28$ & $0.28$ & $-$ & $0.98$\\
        \hline
        $9$ & $0.59$ & $-$ & $0.31$ & $0.30$ & $-$ & $-$\\
        \hline
        $10$ & $0.63$ & $-$ & $0.33$ & $0.33$ & $-$ & $-$\\
        \hline
        $11$ & $0.68$ & $-$ & $0.36$ & $0.35$ & $-$ & $-$\\
        \end{tabular}
        \label{nmg:table:Mixed_3_4}
    \end{center}
\end{table}
(Tables \ref{nmg:table:Mixed_1_4}-\ref{nmg:table:Mixed_3_4}) Equation with mixed derivative changes type from elliptic to hyperbolic when $\tau$ crosses $1$. It is interesting to look at how our models behave when $\tau$ approach $1$.

For $\tau=1/4$ architectures s$1$MG(s), s$3$MG(s) produces more efficient solvers than the standard multigrid with two Jacobi sweeps. On the other hand, U-Net is of no use even on the test set, and fMG and s$1$MG(rs) deteriorate rapidly for $J>5$.

We can see that for $\tau=3/4$ s$3$MG(s) performs substantially better than s$1$MG(s) on the train set. However, on the test set, it results in only a marginally smaller spectral radius. This means that the training and serialization strategies are not ideal. It should be possible to use additional parameters of s$3$MG(s) more efficiently. Other architectures behave similarly to the case $\tau=1/4$.
\section{Reproducibility}
\label{nmg:section:to_be_reproduced}
To ensure complete reproducibility, we share a set of Jupyter notebooks that contain all models, linear equations, and training loops: \textit{https://github.com/VLSF/nmg}.

\section{Related work}
\label{nmg:section:related}
Here we discuss a few related attempts to improve the multigrid method with machine learning tools. In the already mentioned paper \cite{Katrutsa_mult}, authors use stochastic gradient-based optimization to learn optimal multigrid solvers. This work roughly corresponds to architecture s$1$MG(rs), and our training strategy is exactly the same as in \cite{Katrutsa_mult}. From the results, (Tables \ref{nmg:table:Poisson_5}--\ref{nmg:table:Mixed_3_4}) we can conclude that simultaneous optimization of restriction and smoothing operators does not lead to a robust solver.

The other multilevel solver that we tried is a U-Net from \cite{Hsieh_lear}. As we pointed in Section \ref{nmg:section:models}, we cannot be sure that we reproduce results from \cite{Hsieh_lear} because the paper contains omissions. However, U-Net architecture from Fig.~\ref{nmg:fig:unet} fails on the test set and even unable to work on the train set for the anisotropic Poisson equation. It is not hard to see that U-Net architecture is just a slightly generalized filtering preconditioner \cite{Tong_mult}. Given that, the whole scheme from \cite{Hsieh_lear} is a generalized Richardson iteration for the preconditioned system. An optimal spectral radius for the preconditioner Richardson method is $\left(\kappa(NA)-1\right)\big/\left(\kappa(NA)+1\right)$, where $\kappa(NA)$ is a condition number, so to match multigrid $\kappa(NA)$ should be about $1.5$. This means $N$ should be much better than (optimal) Schwarz preconditioners for Poisson equation \cite{Zhang_mult}.

Two articles that firmly demonstrate that machine learning is a valuable tool for the construction of multigrid solvers are \cite{Greenfeld_learning} (geometric multigrid), \cite{Luz_learning} (algebraic multigrid). In both cases, the authors take Gauss-Seidel smoother and focus on restriction weights. In some sense, our contribution is complementary because we focus on finding optimal smoothers and use bilinear interpolation as $P^T$. We can speculate that in both cases, scalable solvers are obtained in part because authors utilize ready-made coarsening strategies and robust smoother. Namely, in \cite{Greenfeld_learning} a strategy from algebraic multigrid (AMG) is used to restore the solution on the fine grid in such a way that $b - Ax = 0$ for red points in a red-black pattern, and in \cite{Luz_learning} authors completely rely on AMG coarsening strategy. An attractive alternative would be to use kriging to perform coarsening as explained in \cite{Gottschalk_coar}.

Other related areas are Bootstrap AMG \cite{Brandt_Bootstrap} and optimization based on local Fourier analysis \cite{Wienands_prac}, \cite{Schmitt_opti}.

\section{Conclusion}
\label{nmg:section:conclusion}
We introduce a convenient architecture that represents multigrid as a convolutional neural network. Using the simple $3D$ Poisson equation, we argue that the training of linear solver should be supplemented by a mechanism that enlarges the network's size. The simplest possible solution based on serialization of layers performs well, i.e., result in a robust solver competitive with a baseline model, but only for some architectures. Sadly, serialization does not work for the most promising architecture that combines optimization of smoothing and restriction operators. In our opinion, this problem can be solved either by a modification of the loss function or by an introduction of an additional mechanism that assembles multigrid based on pretrained layers. This is the focus of our current research.

\end{document}